\def\conv{\operatorname{conv}}
\def\intt{\operatorname{int}}
\def\aff{\operatorname{aff}}
\def\vol{\operatorname{vol}}
\def\R{\mathbb{R}}
\def\Z{\mathbb{Z}}
\def\N{\mathbb{N}}
\def\Q{\mathbb{Q}}
\newcommand{\comment}[1]{}
\newtheorem{theorem}{Theorem}
\numberwithin{theorem}{section}
\newtheorem{lemma}[theorem]{Lemma}
\newtheorem{remark}[theorem]{Remark}
\newtheorem{corollary}[theorem]{Corollary}
\title{Convex integer minimization in fixed dimension}
\author{Timm Oertel, Christian Wagner, Robert Weismantel}
\begin{document}

\maketitle

\begin{abstract}
We show that minimizing a convex function over the integer
points of a bounded convex set is polynomial in fixed dimension.
\end{abstract}


\section{Introduction.}

One of the most important complexity results in integer programming
states that minimizing a linear function over the integer
points in a polyhedron is solvable in polynomial time
provided that the number of integer variables is a constant.
This landmark result due to Lenstra \cite{Lenstra83} has
been generalized by Barvinok \cite{Barvinok94}: he shows
that one can even count the number of integer points in polytopes in
fixed dimension.
More recent extensions of Lenstra's algorithm apply to
integer optimization problems associated with semi-algebraic sets  and
described by quasi-convex polynomials.
The first polynomial time algorithm for minimizing a
quasi-convex polynomial over such sets in fixed dimension is
due to Khachiyan and Porkolab \cite{KhachiyanPorkolab00}.
Recent improvements of the complexity bound are given by Heinz
\cite{Heinz05} and by Hildebrand  and
K\"oppe \cite{HildebrandKoeppe12}.

In this paper, we drop the assumption that the functions describing the
input to our problem are polynomials.
Instead, we aim at minimizing a general convex or quasi-convex
function over the integer points in a bounded convex set in fixed
dimension.
The bounded convex set is defined by convex or quasi-convex functions as well.
We assume that all the functions are encoded by means of
evaluation oracles: queried on a rational point, the
evaluation oracles return the function values that the point
attains. 
We assume that three further oracles are given, namely a continuous feasibility oracle,
a separating hyperplane oracle and a linear integer optimization oracle.
In order to realize them one needs additional assumptions on
the functions.

It is well known that  Lenstra's algorithm can in principle be applied
to any class of convex sets ${\cal C}$ in $\R^n$ when $n$ is a constant
 provided
that we can determine an ellipsoidal approximation for every
member in ${\cal C}$ efficiently.
By an \emph{ellipsoidal approximation} of a convex set we
mean an ellipsoid $E$ contained in the convex set such that
a properly scaled version of $E$ contains the convex set.
(Typically, the scaling factor is ${\cal O}(n)$).
The construction of such an ellipsoidal approximation is
often performed by designing a
\emph{shallow cut separation oracle}
(see, for instance,
\cite[Section 3.3]{GroetschelLovaszSchrijver-Book88}).
To the best of our knowledge it is not known how to
construct ellipsoidal approximations for general convex sets
in polynomial time even when the number of variables is
fixed. This explains why general convex integer minimization problems
with a fixed number of variables have not yet been extensively studied.

We  design a novel polynomial time algorithm for
general convex integer minimization problems in fixed
dimension that avoids going through the construction of
ellipsoidal approximations.
Instead we develop a cone-shrinking algorithm that from
iteration to iteration produces smaller and smaller cones
containing the convex set under consideration until we can
reduce the original question to a series of similar problems
in smaller dimensions.

Our assumptions are as follows.
Let $f_0, \ldots,f_m : \R^n \mapsto \R$ be  quasi-convex functions,
i.e.~for every $\alpha \in \R$ the level set
$\{x \in \R^n \; | \; f_i(x) \le \alpha\}$ is convex.
Note that a convex function is also quasi-convex.
For a given $\varepsilon\in\R_{>0}$, we define
\begin{flalign*}
K_0&:=\left\lbrace x\in \R^n\;|\;f_i(x)\le 0\text{ for all
}i=0,\dots,m\right\rbrace,\\
K_\varepsilon&:=\left\lbrace x\in \R^n\;|\;f_i(x)\le \varepsilon\text{
for all }i=0,\dots,m\right\rbrace.
\end{flalign*}
Moreover, let $B,\Delta,M \in \N$ be given numbers.
We assume that $K_0,\;K_\varepsilon\subset[-B,B]^n$ and that $|f_i(x)|\le M$ for all $x\in[-B,B]^n$ and all $i=0,...,m$.
For a point $x \in \Q^n$ the precision of $x$
is the smallest integer $q \in \N$ such that $x$ has a
representation
$x = ( \frac{p_1}{q}, \ldots, \frac{p_n}{q} )$,
where $p_j \in \Z$ for all $j = 1, \ldots, n$.
We are interested in rational points with a precision of at most $\Delta$.
We assume to have available three oracles.

\begin{quote}
  {\bf $\Delta$-Feasibility Oracle.}
    Given a polytope $P$ in $[-B,B]^n$, the oracle
    returns a point $x \in P\cap K_{\frac{\varepsilon}{2}}$
    with precision at most $\Delta$, or certifies that
    $P\cap K_0$ does not contain a point with precision at
    most $\Delta$. \\[2mm]
  {\bf Separating Hyperplane Oracle.}
    Given an affine space $A$ and a point $a\in A$,
    the oracle returns either that $a\in K_\varepsilon\cap A$
    or it returns a vector $c\in\Q^n$ with $\|c\|_\infty=1$ such that
    $c^\mathsf{T} x\le c^\mathsf{T} a$ for every
    $x\in K_{\frac{\varepsilon}{2}}\cap A$. \\[2mm]
  {\bf Linear Integer Optimization Oracle.}
    Given a polytope $P$ and a linear objective function,
    the oracle returns a point in $P \cap \Z^n$
    with minimum objective function value, or certifies
    that $P \cap \Z^n$ is empty.
\end{quote}

The polytope $P$ that is part of the input in the $\Delta$-Feasibility Oracle
will always be defined as the intersection of the box $[-B,B]^n$ with an
affine space. 
Polynomial time algorithms for realizing a \mbox{$\Delta$-Feasibility} Oracle
can be found in \cite{Ben-TalNemirovski2001} and
\cite{Nesterov-Book04}.

To the best of our knowledge there is no efficient algorithm for
realizing a Separating Hyperplane Oracle in general.
Rather, concrete realizations depend on properties of the functions
$f_i$, $i = 0, \dots, m$.
One particularly relevant case in which the
Separating Hyperplane Oracle can be emulated is as follows.
Let us assume that the functions $f_0, \ldots, f_m$ are
convex. Moreover, let us assume that, for every $x \in [-B,B]^n$ and for every $i\in \{0,\dots, m\}$, a subgradient of $\partial f_i(x)$ is known.
Suppose now that an affine space $A$ and a point $a \in A$ are
given.
The  question is to decide whether
$a \in K_\varepsilon \cap A$, or -- if not -- to
find a hyperplane that separates $a$ from
$K_\frac{\varepsilon}{2} \cap A$.
We start by checking whether
$a \in K_\varepsilon \cap A$.
This can be done by simply substituting $a$ into the
functions $f_i$, $i=0,\ldots,m$.
Let us assume that $a \notin K_\varepsilon \cap A$.
Then there exists one of the
functions $f_i$, say $f_0$, such that
$f_0(a) > \varepsilon > \frac{\varepsilon}{2}$.
We take an element $g\in\partial f_0(a)$.
Note that $g$ is the normal vector of a tangent hyperplane $H$ of the epigraph of $f_0$ at the point $(a,f_0(a))$.
Next we shift $H$ such that it contains the point $a$.
Let the resulting hyperplane be $H'$.
Then $H' \cap A$ is a separating hyperplane.

For a realization of the Linear Integer Optimization Oracle
we refer again to the paper of Lenstra \cite{Lenstra83}.

We emphasize that the parameter $\varepsilon$
does not affect the number of iterations of our cone-shrinking
algorithm.
In fact, from now on we assume that $\varepsilon$ is
fixed.
Of course, it plays a role in the realizations of our
oracles.
Our main contribution is stated in the theorem below.

\begin{theorem}\label{thm:ConIntMin}
Let $f_0, \ldots,f_m$, $K_0$,
$K_\varepsilon$, $B$, and $\Delta$ be
as above.
In polynomial time in $\log(B)$
and $\log(\Delta)$ either one can find a point
$z \in K_\varepsilon \cap \Z^n$, or show that
$K_0 \cap \Z^n = \emptyset$.
\end{theorem}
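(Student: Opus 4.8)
The plan is to proceed by induction on the dimension, following the Lenstra paradigm: in each recursive call we work inside an affine subspace $A\subseteq\R^n$ and try either to produce a lattice point of $K_\varepsilon\cap A$ or to certify that $K_0\cap A\cap\Z^n=\emptyset$. The ambient polytope is always $P=[-B,B]^n\cap A$, as required by the $\Delta$-Feasibility Oracle. First I would call that oracle on $P$. If it certifies that $P\cap K_0$ contains no point of precision at most $\Delta$, then in particular it contains no integer point (which has precision $1\le\Delta$), and since $K_0\subseteq[-B,B]^n$ we may report $K_0\cap A\cap\Z^n=\emptyset$. Otherwise we obtain a point $a\in K_{\frac{\varepsilon}{2}}\cap A$ of precision at most $\Delta$ from which to start.

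The heart of the argument is a subroutine that, in place of an ellipsoidal rounding of $K_{\frac{\varepsilon}{2}}$, produces a \emph{simplicial} rounding: a simplex $S$ with vertices in $K_{\frac{\varepsilon}{2}}\cap A$ and a dimension-dependent constant $\gamma_d$, where $d=\dim A$, such that $S\subseteq K_{\frac{\varepsilon}{2}}\cap A\subseteq\gamma_d\,S$, the scaling being taken about the centroid of $S$. I would build $S$ greedily, in the spirit of a maximum-volume inscribed simplex: starting from $a$, repeatedly adjoin a vertex of $K_{\frac{\varepsilon}{2}}\cap A$ that is approximately farthest from the affine hull of the vertices chosen so far. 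Each such step is an approximate maximization of a functional $c^\mathsf{T}x$ over $K_{\frac{\varepsilon}{2}}\cap A$, and this is exactly what the cone-shrinking algorithm performs using only the Separating Hyperplane Oracle: it maintains a shrinking simplicial cone that contains $K_{\frac{\varepsilon}{2}}\cap A$, repeatedly querying the oracle at boundary points and intersecting with the returned half-spaces, so that the cone's aperture contracts by a constant factor per iteration. If the cone shrinks to near-flatness transverse to the current hull, the greedy step cannot extend the simplex and we have directly exposed a thin direction; otherwise the optimizing direction is pinned down to the accuracy needed for a constant-factor volume approximation, and the simplex version of the John position yields the containment $K_{\frac{\varepsilon}{2}}\cap A\subseteq\gamma_d\,S$.

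With $S$ in hand I would invoke the dichotomy supplied by the flatness theorem, applying the Linear Integer Optimization Oracle to the \emph{polytope} $S$. If $S\cap\Z^n\neq\emptyset$, the oracle returns such a point; since $S\subseteq K_{\frac{\varepsilon}{2}}\subseteq K_\varepsilon$, it is a valid answer. If $S\cap\Z^n=\emptyset$, then by flatness the lattice width of $S$ is at most the flatness constant $\omega(d)$, and from $K_{\frac{\varepsilon}{2}}\cap A\subseteq\gamma_d\,S$ we extract a primitive integer direction $c$ with $\max_{x\in K_{\varepsilon/2}\cap A}c^\mathsf{T}x-\min_{x\in K_{\varepsilon/2}\cap A}c^\mathsf{T}x\le\gamma_d\,\omega(d)=:W$, a constant depending only on $d$. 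Because the explicit vertices of $\gamma_d\,S$ bound this range, only $O(W)$ integers $t$ admit an $x\in K_{\frac{\varepsilon}{2}}\cap A$ with $c^\mathsf{T}x=t$, and every lattice point of $K_0\cap A$ lies in one of the hyperplanes $\{c^\mathsf{T}x=t\}\cap A$. I would recurse on each of these at most $W+1$ affine spaces of dimension $d-1$, returning a feasible point as soon as one branch does and certifying emptiness only if all branches do. Since $d\le n$ is fixed and each level spawns a constant number of branches, the recursion tree has constantly many leaves, so the whole procedure makes polynomially many oracle calls.

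The step I expect to be the main obstacle is the analysis of the cone-shrinking subroutine: proving that it terminates after a number of iterations that is polynomial in $\log B$ and $\log\Delta$ while guaranteeing a sandwiching ratio $\gamma_d$ that depends on the dimension alone, and controlling the precision of the simplex vertices so that the points fed to the Linear Integer Optimization Oracle and the integer hyperplanes $\{c^\mathsf{T}x=t\}$ stay within the precision-$\Delta$ regime in which the other oracles operate. A secondary but pervasive point to verify is the consistent use of the $\varepsilon$-gap: every rounding and every feasibility decision is made with respect to $K_{\frac{\varepsilon}{2}}$, so that any lattice point we output is certified to lie in $K_\varepsilon$, while emptiness is only ever asserted for $K_0$, which is precisely the asymmetry the three oracles are designed to respect.
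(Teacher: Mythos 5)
There is a genuine gap, and it sits exactly where you suspected: the simplicial rounding $S \subseteq K_{\frac{\varepsilon}{2}}\cap A \subseteq \gamma_d S$ cannot be realized with the oracles available in this paper. The inner inclusion is unproblematic, but the outer inclusion is a \emph{non-existence} certificate: you must rule out points of $K_{\frac{\varepsilon}{2}}\cap A$ lying outside $\gamma_d S$. The only tool for ruling anything out is the Separating Hyperplane Oracle (plus the $\Delta$-Feasibility Oracle, whose negative answers concern only precision-$\Delta$ points of $K_0$), and cutting-plane arguments built on it can only certify statements about points that are quantifiably \emph{deep} inside $K_{\frac{\varepsilon}{2}}$: when you try to upper-bound a linear functional, the cap $\{c^\mathsf{T}x \ge t\}\cap K_{\frac{\varepsilon}{2}}\cap A$ near the true maximum can have arbitrarily small (even zero) volume, so driving the outer polytope's volume down never certifies that the cap is empty. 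Since the paper assumes no Lipschitz bounds on the $f_i$ and no inner-ball guarantee, the gap between $K_0$, $K_{\frac{\varepsilon}{2}}$ and $K_\varepsilon$ is a gap in function value with no geometric thickness attached: $K_{\frac{\varepsilon}{2}}$ may be flat or lower-dimensional, and integer points of $K_0$ may sit right at its boundary, outside any region your greedy procedure can certify. This is precisely the obstruction the introduction points to (no known construction of ellipsoidal -- or, for the same reason, simplicial -- approximations of general convex sets from such oracles, even in fixed dimension), and circumventing it is the entire point of the paper; your plan assumes it away. Your reading of ``cone-shrinking'' as an approximate linear-optimization subroutine is also not what the paper does: there the cones are truncated cones $T_i=\conv(\{y\},Q_i)$ over facets of $[-B,B]^n$, shrunk to localize integer points, not to round $K_{\frac{\varepsilon}{2}}$.

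For contrast, the paper's proof never rounds $K_{\frac{\varepsilon}{2}}$ at all. It only ever applies John-type rounding to \emph{explicitly given polytopes}: the facet slices $Q_i$ and their shrunken versions $\bar{Q}_i$, computable from inequality descriptions (Remark~\ref{rem:shrinking}). The Linear Integer Optimization Oracle is run on the explicit inner truncated cone $\bar{T}_i=\conv(\{y\},\bar{Q}_i)$; if it is integer-free, Lemma~\ref{covering2} (an elementary box-covering substitute for the flatness theorem, applied to the pair $\bar{T}_i\subset T_i$ whose sandwiching holds \emph{by construction}, not by oracle certification) puts all integer points of $T_i$ on at most $4^n n^{3n}$ hyperplanes and the algorithm recurses in dimension $n-1$. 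If it returns an integer optimum $x^\star\notin K_\varepsilon$, the Separating Hyperplane Oracle is queried only at that single point, and the resulting cut through $x^\star\in\bar{Q}$-type position forces, via Brunn--Minkowski (Lemma~\ref{cutting}), a constant-factor volume decrease; after $O(\log B)$ iterations Lemma~\ref{minimizing} collapses the problem to dimension $n-1$. The asymmetry you correctly identified (separate from $K_{\frac{\varepsilon}{2}}$, certify membership in $K_\varepsilon$, assert emptiness only for $K_0$) is respected there because every discarded region is shown disjoint from $K_{\frac{\varepsilon}{2}}\supseteq K_0$ by a single concrete separating hyperplane, never by an approximation guarantee.
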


Theorem~\ref{thm:ConIntMin} allows us to minimize any
quasi-convex function $f_0$ over the integer points of a
convex set
$L := \{x \in [-B,B]^n \; | \; f_i(x) \le 0,
\text{ for all } i = 1, \dots, m\}$
in polynomial time, when $n$ is fixed.
An approximate solution of the problem
$\min \{f_0(x) \; | \; x \in L \cap \Z^n \}$ can be computed
by binary search in the interval $[-M,M]$.
This follows since, for any $\gamma \in [-M,M]$,
Theorem~\ref{thm:ConIntMin} can be applied to the set
$\{x \in [-B,B]^n \; | \; f_0(x) - \gamma \le 0, \;
f_i(x) \le 0, \text{ for all } i = 1, \dots, m\}$ instead of
$K_0$.
We thus derive the following corollary.

\begin{corollary}
Let $f_0, \ldots,f_m$, $B$,
$\Delta$, and $M$ be as above.
In polynomial time in $\log(B)$, $\log(\Delta)$ and 
$\log(M)$ either one can find a point $z \in \Z^n$ such
that $f_0(z) \le \min \{f_0(x) \; | \; x \in \Z^n
\text{ and } f_i(x) \le \varepsilon \text{ for all }
i= 1, \dots, m\} + \varepsilon$, or show that the problem
$\min \{f_0(x) \; | \; x \in \Z^n \text{ and } f_i(x) \le
0 \text{ for all } i= 1, \dots, m\}$ is infeasible.
\end{corollary}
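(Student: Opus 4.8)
The plan is to reduce the approximate minimization to a logarithmic number of feasibility questions, each handled by Theorem~\ref{thm:ConIntMin}, driven by a binary search on the objective value. Fix a guess $\gamma\in[-M,M]$ for the optimal value and consider the shifted family $g_0:=f_0-\gamma$ and $g_i:=f_i$ for $i=1,\dots,m$. Shifting by a constant preserves every level set, so $g_0$ is again quasi-convex, on $[-B,B]^n$ we have $|g_0|\le 2M$, and the three oracles for the $g_i$ are obtained from those for the $f_i$ by the same shift. Applying Theorem~\ref{thm:ConIntMin} to this family, with $K_0$ and $K_\varepsilon$ replaced by $\{x\in[-B,B]^n : f_0(x)\le\gamma,\ f_i(x)\le 0\ \forall i\ge 1\}$ and $\{x\in[-B,B]^n : f_0(x)\le\gamma+\varepsilon,\ f_i(x)\le\varepsilon\ \forall i\ge 1\}$, returns, in time polynomial in $\log(B)$ and $\log(\Delta)$, one of two things.

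In the first case it returns a point $z\in\Z^n$ with $f_i(z)\le\varepsilon$ for all $i\ge 1$ and $f_0(z)\le\gamma+\varepsilon$; such a $z$ is feasible for the relaxed system and certifies an \emph{upper} bound on the value we can achieve, so I would record $z$ and continue with smaller guesses. In the second case it certifies that no integer point satisfies $f_0\le\gamma$ together with $f_i\le 0$ for all $i\ge 1$; this is a \emph{lower} bound, namely $\min\{f_0(x) : x\in\Z^n,\ f_i(x)\le 0\}>\gamma$, and I would continue with larger guesses. Throughout the search I maintain the largest guess $\ell$ that produced a lower bound and the best recorded witness value $v^\star$, bisecting $[-M,M]$ in the usual way; the initialization is legitimate because $|f_0|\le M$ on the box forces the optimum into $[-M,M]$ whenever the system is feasible.

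For termination I would first test $\gamma=M$: if Theorem~\ref{thm:ConIntMin} reports the second case there, then no integer point satisfies $f_i\le 0$ for all $i\ge 1$, and the hard problem is shown infeasible. Otherwise I bisect until $v^\star-\ell\le\varepsilon$. Since $\gamma$ stays a rational of bit-size $O(\log M)$ and $\varepsilon$ is fixed, this needs $O(\log(M/\varepsilon))=O(\log M)$ iterations, each a single polynomial call to Theorem~\ref{thm:ConIntMin}, giving the claimed bound polynomial in $\log(B)$, $\log(\Delta)$, $\log(M)$. At termination the recorded witness $z$ is feasible for the relaxed system and satisfies $f_0(z)=v^\star\le\ell+\varepsilon$, while $\ell$ is a valid lower bound on the optimum; this is the additive-$\varepsilon$ optimality asserted by the corollary.

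The step I expect to be delicate is the last one. Theorem~\ref{thm:ConIntMin} leaves a gap between the $K_0$-level at which it certifies infeasibility and the $K_\varepsilon$-level at which it returns a witness: the lower bound $\ell$ is read off the $\le 0$ constraints, whereas the recorded witness only meets the $\le\varepsilon$ constraints. Closing this gap so that $v^\star$ and $\ell$ straddle exactly the minimum over $\{f_i\le\varepsilon\}$ that appears in the statement—rather than the minimum over the smaller set $\{f_i\le 0\}$, or a bound weakened to $2\varepsilon$—is the crux, and I would expect the argument to exploit that every returned witness is itself feasible for the $\le\varepsilon$ system, together with a careful stopping rule and a careful choice of which endpoint furnishes the comparison value.
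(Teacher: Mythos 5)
Your proposal is essentially the paper's own proof: the paper disposes of this corollary in three sentences, performing exactly your binary search over $\gamma\in[-M,M]$ and applying Theorem~\ref{thm:ConIntMin} to the set $\{x\in[-B,B]^n \;|\; f_0(x)-\gamma\le 0,\ f_i(x)\le 0 \text{ for all } i=1,\dots,m\}$ in place of $K_0$, with none of the termination details you supply. The delicate point you flag at the end is genuine but is not resolved by the paper either: since infeasibility certificates only bound the minimum over $\{f_i\le 0\}$ from below, the search actually delivers an $\varepsilon$-feasible point $z$ whose value is within $\varepsilon$ (plus the search tolerance) of that minimum, and the corollary's stronger-looking comparison against the minimum over $\{f_i\le\varepsilon\}$ has to be read in this looser sense.
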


In the next section, we introduce the notation and we prove
statements that are needed to show
Theorem~\ref{thm:ConIntMin}.
Section~\ref{sec.thm} contains the proof of
Theorem~\ref{thm:ConIntMin}.
Section~\ref{sec:mixed-integer} describes a straightforward
generalization of our algorithm to the mixed integer case.

\section{Auxiliary lemmata.}

For a set $S \subset \R^n$ we denote by $\aff(S)$ the affine
hull of $S$, by $\conv(S)$ the convex hull
of $S$, by $\intt(S)$ the interior of $S$, by $\dim(S)$ the
dimension of the smallest affine space containing $S$, and
by $\vol_j(S)$, $j = 1, \dots, n$, the Lebesgue measure of
$S$ with respect to a $j$-dimensional affine subspace
containing it.
We omit the subscript and simply write $\vol(S)$ whenever
the dimension is clear from the context.
For two sets $S,T \subset \R^n$ we denote by
$S+T := \{s+t: s \in S,\ t \in T\}$ the Minkowski sum of $S$
and $T$.
When $S$ is bounded, the set $S-S := \{x-y : x,y \in S\}$ is
called the difference body of $S$.
If $M \in \R^{n \times n}$ is a matrix, then $\det(M)$
denotes the determinant of $M$.

In the remainder of this section we present five lemmata.
Lemmata~\ref{shrinking} and \ref{covering} are needed to
prove Lemma~\ref{covering2}.
Lemmata~\ref{minimizing}, \ref{cutting}, and \ref{covering2}
are used in the next section to prove
Theorem~\ref{thm:ConIntMin}.
The following lemma states that the convex hull of the
integer points of an $n$-dimensional closed convex set is
lower-dimensional whenever its volume is sufficiently
small. 

\begin{lemma}\label{minimizing}
Let $K \subset \R^n$ be a closed convex set such that
$\vol(K) < \frac{1}{n!}$.
Then
$\dim(\conv(K \cap \Z^n))\le n-1$.
\end{lemma}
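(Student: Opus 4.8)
The plan is to argue by contraposition: I would assume that $\dim(\conv(K \cap \Z^n)) = n$ and derive the lower bound $\vol(K) \ge \frac{1}{n!}$, contradicting the hypothesis. Under the assumption of full dimension, the set $K \cap \Z^n$ contains $n+1$ affinely independent integer points $v_0, v_1, \ldots, v_n$. Since $K$ is convex, the simplex $S := \conv(v_0, \ldots, v_n)$ is contained in $K$, so $\vol(K) \ge \vol(S)$ and it suffices to bound $\vol(S)$ from below.

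The key observation is the standard volume formula for a simplex,
\[
\vol(S) = \frac{1}{n!}\left| \det\bigl( v_1 - v_0, \; \ldots, \; v_n - v_0 \bigr) \right|.
\]
Here the matrix whose columns are the edge vectors $v_j - v_0$ has integer entries, because all $v_j$ lie in $\Z^n$. Affine independence of $v_0, \ldots, v_n$ is exactly the statement that these $n$ column vectors are linearly independent, so the determinant is a nonzero integer. Consequently $\left| \det(v_1 - v_0, \ldots, v_n - v_0) \right| \ge 1$, and therefore $\vol(S) \ge \frac{1}{n!}$.

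Combining these two steps yields $\vol(K) \ge \vol(S) \ge \frac{1}{n!}$, which contradicts the assumption $\vol(K) < \frac{1}{n!}$. Hence the initial supposition is impossible and $\dim(\conv(K \cap \Z^n)) \le n-1$, as claimed.

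I do not expect any genuine obstacle here; the argument is short and relies only on convexity of $K$ to place the lattice simplex inside $K$, together with the elementary fact that a full-dimensional lattice simplex has volume at least $\frac{1}{n!}$. The only point that deserves care is verifying that full dimensionality of $\conv(K \cap \Z^n)$ really does supply $n+1$ affinely independent integer points (rather than merely spanning the affine hull with non-lattice combinations), but this is immediate from the definition of the dimension of a convex hull.
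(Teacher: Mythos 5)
Your proposal is correct and follows essentially the same route as the paper: assume $n+1$ affinely independent points in $K \cap \Z^n$, bound the volume of their convex hull below by $\frac{1}{n!}$ via the determinant formula (a nonzero integer determinant), and contradict $\vol(K) < \frac{1}{n!}$. The only difference is that you spell out explicitly the steps the paper leaves implicit, namely that $\conv(v_0,\dots,v_n) \subset K$ by convexity and that the integer determinant is nonzero by affine independence.
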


\begin{proof}
For the purpose of deriving a contradiction assume that
there exist $n+1$ affinely independent points
$v_0, \dots ,v_n\in K\cap\Z^n$.
Then
$\vol(\conv(\{v_0, \dots, v_n\}))=\frac{1}{n!} \mathopen |\det(v_1-v_0,\dots,v_n-v_0)|\ge\frac{1}{n!}$.
\end{proof}

In the subsequent lemma we define for every $n$-dimensional
closed convex set $K$ a corresponding set
$\bar{K}$ that is contained in $K$.
The set $\bar{K}$ has the property that the Minkowski sum of
$\bar{K}$ and a certain scaling of the difference body of
$K$ is a subset of $K$.
Then, Lemma~\ref{shrinking} gives an outer approximation of
$K$ and an inner approximation of $\bar{K}$ in terms of a
certain ellipsoid.
Consequently, this ellipsoid can be used to approximate
$K\setminus\bar{K}$.
Furthermore, we always have that $\bar{K}\neq\emptyset$.

\begin{lemma}\label{shrinking}
Let $K\subset\R^n$ be an $n$-dimensional closed convex set,
and let
\begin{equation*}
\bar{K} := \left\lbrace x\in\R^n\;\Big{|}\;x+\frac{1}{4n}(K-K)\subset K\right\rbrace.
\end{equation*}
Then there exists an ellipsoid $E \subset \R^n$ and a point
$c \in \bar{K}$ such that
$c + \frac{1}{2} E \subset \bar{K}$ and
$K \subset c + n E$.
\end{lemma}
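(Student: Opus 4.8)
The plan is to read off both $E$ and $c$ directly from the classical theorem of John. Since $K$ is a full-dimensional compact convex set (it must be bounded, as otherwise the outer inclusion $K \subseteq c+nE$ could not hold), it has a maximum-volume inscribed ellipsoid, which I write as $c + E_0$ with $E_0$ a centrally symmetric (centered) ellipsoid and $c$ its center. John's theorem then provides the two-sided sandwich $c + E_0 \subseteq K \subseteq c + nE_0$. I claim that $E := E_0$ together with this same $c$ already works: the outer inclusion $K \subseteq c + nE$ is exactly the right half of the sandwich, so the whole argument reduces to proving $c + \tfrac12 E_0 \subseteq \bar K$.

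To get there, first I would bound the difference body. From $K \subseteq c + nE_0$ one obtains $K - K \subseteq nE_0 - nE_0$, and the central symmetry of $E_0$ gives $nE_0 - nE_0 = 2nE_0$; hence $\tfrac{1}{4n}(K-K) \subseteq \tfrac12 E_0$. This is the step where the constant $\tfrac{1}{4n}$ in the definition of $\bar K$ turns out to be calibrated precisely so that the eroding set fits inside a half-copy of $E_0$.

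With this estimate in hand, the desired containment is a one-line Minkowski-sum computation: for any $x \in c + \tfrac12 E_0$,
\[
x + \tfrac{1}{4n}(K-K) \subseteq \left(c + \tfrac12 E_0\right) + \tfrac12 E_0 = c + E_0 \subseteq K,
\]
so $x$ satisfies the defining condition of $\bar K$, i.e.\ $x \in \bar K$. Thus $c + \tfrac12 E \subseteq \bar K$, and taking $x = c$ simultaneously shows $c \in \bar K$ (in particular $\bar K \neq \emptyset$), which completes the verification of both required inclusions.

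The argument involves no heavy computation, so the difficulty is conceptual rather than technical. The crux is to recognize that the right object is the John ellipsoid of $K$ itself -- not of $\bar K$ -- and that the factor $\tfrac{1}{4n}$ is exactly what erodes $c + E_0$ down to (at worst) $c + \tfrac12 E_0$, matching the factor $\tfrac12$ appearing in the statement. The only place demanding a little care is the identity $nE_0 - nE_0 = 2nE_0$, which relies on $E_0$ being centered at the origin; this is the reason it is convenient to split the John ellipsoid as the translate $c + E_0$ of a centered ellipsoid.
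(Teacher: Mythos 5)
Your proof is correct and follows essentially the same route as the paper's: both invoke John's theorem to get $c + E \subseteq K \subseteq c + nE$, deduce $\frac{1}{4n}(K-K) \subseteq \frac{1}{2}E$ from the difference-body bound $K - K \subseteq 2nE$, and conclude via the Minkowski-sum identity $\frac{1}{2}E + \frac{1}{2}E = E$ that $c + \frac{1}{2}E \subseteq \bar{K}$. The only cosmetic difference is that the paper also records the (unneeded) lower bound $2E \subseteq K-K$, which your argument rightly omits.
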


\begin{proof}
By John's characterization of inscribed ellipsoids of
maximal volume (see John \cite{John48} and Ball
\cite{Ball92}), there exists an ellipsoid $E$ centered at
the origin, and a point
$c$ such that $c + E \subset K \subset c + n E$.
By the definition of the difference body $K-K$, it follows
that
$2E=E-E\subset K-K\subset nE-nE= 2nE$.
This implies $\frac{1}{4n}(K-K)\subset\frac{1}{2}E$ and thus
$\frac{1}{2}E+\frac{1}{4n}(K-K)\subset E \subset K - c$.
Hence, $(c + \frac{1}{2} E) + \frac{1}{4n}(K-K)
\subset K$.
This implies that $c + \frac{1}{2} E \subset \bar{K}$.
In particular, $c \in \bar{K}$.
\end{proof}

\begin{remark}\label{rem:shrinking}
If $K\subset\R^n$ is a polytope, then the set $\bar{K}$ as defined in the previous lemma can be computed explicitly.
For that, let $K= \{ x\in\R^n \;|\; a_i^\mathsf{T}x\le b_i, \text{ for } i=1,\ldots, m \}$ be represented by facet-defining inequalities.
Then, for all $i=1,\ldots,m$, we define $\rho_i:=b_i-\min\{a_i^\mathsf{T}x\;|\;x\in K\}$, 
i.e. the width of $K$ with respect to $a_i$.
Since for the difference body $K-K$ it holds that
$\max\{a_i^\mathsf{T}x\;|\;x\in K-K\}-\min\{a_i^\mathsf{T}x\;|\;x\in K-K\}=2\rho_i$ for all $i$,
it follows that $\bar{K}=\{ x\in\R^n \;|\; a_i^\mathsf{T}x\le b_i-\frac{1}{4n}\rho_i, \text{ for } i=1,\ldots, m \}$.
\end{remark}

For the two sets $K$ and $\bar{K}$ defined in 
Lemma~\ref{shrinking}, we prove next that when
intersecting $K$ with a half-space containing a point of
$\bar{K}$ on its boundary, the volume of this
intersection is guaranteed to decrease by a constant factor
that is only dependent on the dimension.

\begin{lemma}\label{cutting}
Let $K\subset\R^n$ be an $n$-dimensional closed convex set,
and let
$\bar{K}$ be defined as in Lemma~\ref{shrinking}. 
Furthermore, let $x^\star\in\bar{K}$ and let $H^+$ be a half-space with $x^\star$ on its boundary.
Then
\begin{equation*}
\vol(K\cap H^+)\le\left(1-\frac{1}{n^n2^{n+1}}\right)\vol(K).
\end{equation*}
\end{lemma}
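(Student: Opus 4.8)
The plan is to reduce to a normalized configuration via the ellipsoid supplied by Lemma~\ref{shrinking}, and then to exhibit inside $K\cap H^-$ (where $H^-$ denotes the half-space complementary to $H^+$) a region whose volume is a fixed fraction of $\vol(K)$. Since volume ratios are invariant under invertible affine maps, I would first apply such a map sending $E$ to the unit ball $B$; writing $\omega_n:=\vol(B)$, Lemma~\ref{shrinking} then gives
\[
c+\tfrac12 B\subset\bar K\subset K\subset c+nB,\qquad\text{so}\qquad \vol(K)\le n^n\omega_n .
\]
Rewriting the target as $\vol(K\cap H^-)\ge\frac{1}{n^n2^{n+1}}\vol(K)$ and using the identity $n^n2^{n+1}=2\,(2n)^n$, it suffices to produce a subset of $K\cap H^-$ of volume at least $\frac12\bigl(\frac{1}{2n}\bigr)^{n}\vol(K)$; by the volume bound above it is in fact enough to find such a subset of volume $\frac{\omega_n}{2^{n+1}}$, i.e. half the volume of the inscribed ball $c+\frac12 B$.

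For the core estimate I would decompose $K$ into slabs orthogonal to the normal $u$ of $H$ (oriented so that $H^-=\{x:u^{\mathsf T}x\le u^{\mathsf T}x^\star\}$). Set $a:=\min_{x\in K}u^{\mathsf T}x$, $b:=\max_{x\in K}u^{\mathsf T}x$, $t^\star:=u^{\mathsf T}x^\star$, and $A(t):=\vol_{n-1}(K\cap\{u^{\mathsf T}x=t\})$, so that $\vol(K\cap H^-)=\int_a^{t^\star}A(t)\,dt$. The membership $x^\star\in\bar K$ is exactly the statement $x^\star+\frac{1}{4n}(K-K)\subset K$; projecting this inclusion onto the direction $u$ and using that the $u$-width of $K-K$ equals $2(b-a)$ yields the depth bound $t^\star-a\ge\frac{1}{4n}(b-a)$. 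By the Brunn--Minkowski inequality $A^{1/(n-1)}$ is concave on $[a,b]$, and among all nonnegative concave profiles with a prescribed value of $\lambda:=\frac{t^\star-a}{b-a}$ the ratio $\int_a^{t^\star}A\big/\int_a^{b}A$ is minimized by the cone profile $A(t)=\text{const}\cdot(t-a)^{n-1}$ (a concave $A^{1/(n-1)}$ vanishing at $a$ lies above its chord from $a$, which is the extremal case). This gives $\vol(K\cap H^-)\ge\lambda^{n}\vol(K)\ge(4n)^{-n}\vol(K)$, already establishing a bound of the form $1-(4n)^{-n}$. The same estimate can be obtained without Brunn--Minkowski: taking $w\in\arg\max_{x\in K}u^{\mathsf T}x$, the scaled copy $x^\star+\frac{1}{4n}(K-w)$ lies in $K\cap\overline{H^-}$ and has volume $(4n)^{-n}\vol(K)$.

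The main obstacle is sharpening the factor from $(4n)^{-n}$ to $\frac12(2n)^{-n}$, i.e. recovering the missing factor $2^{\,n-1}$. To do so I would feed the inscribed ball into the slab estimate. On the one hand, the cross-section through $x^\star$ satisfies $A(t^\star)\ge\omega_{n-1}(\tfrac{1}{2n})^{n-1}$, because $x^\star+\frac{1}{2n}B\subset K$ (which follows from $2B=B-B\subset K-K$ together with $x^\star\in\bar K$). On the other hand, when the center $c$ lies on the $H^-$ side of $H$, half of the inscribed ball $c+\frac12 B$ already lies in $K\cap H^-$ and contributes exactly $\frac{\omega_n}{2^{n+1}}$, which finishes this case. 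The delicate situation is therefore $c\in H^+$: here the inscribed ball sits on the wrong side, and one must combine the guaranteed depth $t^\star-a\ge\frac{1}{4n}(b-a)$ with the guaranteed cross-sectional radius $\frac{1}{2n}$ at level $t^\star$, building a cone from the deepest point of $K$ in $H^-$ over this cross-section and verifying that its portion in $H^-$ still carries volume $\frac{\omega_n}{2^{n+1}}$. Balancing these two guarantees against $\vol(K)\le n^n\omega_n$ is the step I expect to require the most care, and it is where the precise constant $\frac{1}{n^n2^{n+1}}$ is pinned down.
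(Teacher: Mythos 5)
Your first estimate is sound: the one-sided copy $x^\star+\frac{1}{4n}(K-w)$ with $w\in\arg\max_{x\in K}u^{\mathsf T}x$ lies in $K\cap\overline{H^-}$ and has volume $(4n)^{-n}\vol(K)$, which rigorously gives the weaker bound $1-(4n)^{-n}$; the half-ball argument in your case $c\in H^-$ is also fine. The genuine gap is in the sharpening, and it is not just a step "requiring care" --- the route you propose cannot be completed. You replace the relative target $\frac{1}{n^n2^{n+1}}\vol(K)$ by the absolute target $\frac{\omega_n}{2^{n+1}}$ via $\vol(K)\le n^n\omega_n$, but that inequality is extremely lossy exactly when $K$ is close to its inscribed ball, and then $K\cap H^-$ can have \emph{total} volume below $\frac{\omega_n}{2^{n+1}}$, so no subset of it can reach your target. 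Concretely, let $K=c+B$ be the unit ball itself (so $E=B$, $\vol(K)=\omega_n$, and $\bar K=(1-\frac{1}{2n})B$ since $K-K=2B$), take $x^\star=(1-\frac{1}{2n})u$ for a unit vector $u$, and let $H^+=\{x\;|\;u^{\mathsf T}x\le u^{\mathsf T}x^\star\}$. Then $c\in H^+$ (your delicate case) and $K\cap H^-$ is a spherical cap of height $\frac{1}{2n}$: for $n=3$ its volume is $\frac{17\pi}{648}\approx 0.082$, while $\frac{\omega_3}{2^{4}}=\frac{\pi}{12}\approx 0.262$, and the shortfall grows exponentially in $n$. (The lemma itself is safe here: the cap still exceeds $\frac{\vol(K)}{n^n2^{n+1}}=\frac{\pi}{324}\approx 0.010$.) Any argument aiming at a volume threshold independent of $\vol(K)$ must fail in this regime, so the case split on the position of $c$ cannot be patched as stated.

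The missing idea --- and the paper's actual proof, which is three lines --- is to stay relative to $\vol(K)$ by applying Brunn--Minkowski to the difference body itself rather than to cross-sections: $\vol(K-K)\ge 2^n\vol(K)$. Since $\frac{1}{4n}(K-K)$ is centrally symmetric about the origin and $x^\star$ lies on the boundary of $H^+$, reflection through $x^\star$ shows that exactly half of $x^\star+\frac{1}{4n}(K-K)\subset K$ lies on the $H^-$ side:
\begin{equation*}
\vol\Bigl(\bigl(x^\star+\tfrac{1}{4n}(K-K)\bigr)\cap H^-\Bigr)=\tfrac12\,\vol\bigl(\tfrac{1}{4n}(K-K)\bigr)\ \ge\ \tfrac12\cdot\tfrac{2^n}{(4n)^n}\,\vol(K)\ =\ \tfrac{1}{n^n2^{n+1}}\,\vol(K),
\end{equation*}
which is the claim. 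This is precisely the factor $2^{n-1}$ you were hunting for: it comes from symmetrizing your one-sided copy $\frac{1}{4n}(K-w)$ into the two-sided copy $\frac{1}{4n}(K-K)$ (gain $2^n$ from Brunn--Minkowski, loss $\frac12$ from keeping only one half), not from the John ellipsoid. Indeed, the paper's proof of this lemma never uses the ellipsoid or any normalization $E\to B$; it uses only the defining inclusion $x^\star+\frac{1}{4n}(K-K)\subset K$, so your entire first reduction can be discarded.
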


\begin{proof}
The Brunn-Minkowski inequality
(see, for instance, Gruber \cite[Theorem 8.5]{Gruber-Book07}) states that $2^n\vol(K)\le\vol(K-K)$.
In addition, we have
$x^\star+\frac{1}{4n}(K-K)\subset K$.
Furthermore, due to the central symmetry of the difference
body $K-K$, we have
\begin{equation*}
\vol\left(\left(x^\star+\frac{1}{4n}(K-K)\right)\cap H^+\right)=\frac{1}{2}\vol\left(\frac{1}{4n}(K-K)\right).
\end{equation*}
Hence,
\begin{equation*}
\vol(K\cap H^+)\le\vol(K)-\frac{1}{2}\vol\left(\frac{1}{4n}(K-K)\right)\le\left(1-\frac{1}{n^n2^{n+1}}\right)\vol(K).
\end{equation*}
\end{proof}

The following lemma is one of the key ingredients of our
proof of Theorem~\ref{thm:ConIntMin}.
It applies to two similar truncated second order cones:
if one of them does not contain a point of a
lattice, then the lattice points contained in the other truncated
cone lie on a number of hyperplanes which only depends on $n$.

\begin{lemma}\label{covering}
Let $\Lambda$ be an arbitrary lattice in $\R^n$.
Moreover, let
\begin{equation*}
C:=\left\lbrace x\in\R^n\;\Big{|}\;\frac{1}{2(n-1)}\sum_{i=1}^{n-1}x_i^2\le x_n \le1\right\rbrace,
\end{equation*}
and let
\begin{equation*}
\bar{C}:=\left\lbrace  x\in\R^n\;\Big{|}\;\sum_{i=1}^{n-1}x_i^2\le x_n\le1\right\rbrace.
\end{equation*}
If $\intt(\bar{C})\cap\Lambda=\emptyset$, then the lattice points $C\cap\Lambda$ lie on at most $4^n n^{3n}$ hyperplanes.
\end{lemma}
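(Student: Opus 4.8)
The plan is to exhibit a single family of parallel lattice hyperplanes that already captures all of $C\cap\Lambda$. The mechanism is the flatness phenomenon: the hypothesis $\intt(\bar C)\cap\Lambda=\emptyset$ forces $\bar C$ to be thin in some lattice direction, and since $C$ is merely a bounded rescaling of $\bar C$, that same direction controls $C$ as well.

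First I would record the elementary containment $C\subseteq 2(n-1)\bar C$. Writing $x=(x_1,\dots,x_{n-1},x_n)$, membership $x\in 2(n-1)\bar C$ amounts to $\tfrac{1}{2(n-1)}\sum_{i=1}^{n-1}x_i^2\le x_n\le 2(n-1)$, and every $x\in C$ satisfies these, since its defining inequalities read $\tfrac{1}{2(n-1)}\sum_{i=1}^{n-1}x_i^2\le x_n\le 1\le 2(n-1)$ for $n\ge 2$ (the case $n=1$ is a segment and is trivial). Both bodies are bounded; in fact $\bar C\subseteq[-1,1]^{n-1}\times[0,1]$, since $x_n\in[0,1]$ and each $x_i^2\le x_n\le 1$ there.

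Second, I would apply the flatness theorem to $\bar C$: as $\bar C$ is a bounded convex body whose interior contains no point of $\Lambda$, there is a nonzero vector $c$ of the dual lattice $\Lambda^\star$ with $w_c(\bar C):=\max_{x\in\bar C}c^\mathsf{T}x-\min_{x\in\bar C}c^\mathsf{T}x$ bounded by a constant $\omega(n)$ depending only on $n$. By the containment and the monotonicity and positive homogeneity of the width, $w_c(C)\le w_c\big(2(n-1)\bar C\big)=2(n-1)\,w_c(\bar C)\le 2(n-1)\,\omega(n)$. For the counting step, note that because $c\in\Lambda^\star$ the map $x\mapsto c^\mathsf{T}x$ sends every point of $\Lambda$ to an integer, so $\Lambda$ is partitioned by the parallel hyperplanes $\{x:c^\mathsf{T}x=k\}$, $k\in\Z$. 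For $x\in C\cap\Lambda$ the value $c^\mathsf{T}x$ lies in an interval of length $w_c(C)$, hence attains at most $w_c(C)+1\le 2(n-1)\,\omega(n)+1$ distinct integers, placing $C\cap\Lambda$ on that many hyperplanes.

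The main obstacle is producing the flatness bound $\omega(n)$ explicitly, so that $2(n-1)\,\omega(n)+1\le 4^n n^{3n}$; rather than invoke an optimal flatness constant as a black box, I would derive one elementarily. Using Lemma~\ref{shrinking} I would sandwich $\bar C$ between an inner ellipsoid $p+\tfrac12 E$ and an outer ellipsoid $p+nE$; since $\intt(p+\tfrac12 E)\cap\Lambda=\emptyset$, mapping $E$ to the Euclidean ball by a linear transformation turns this into the statement that a ball is free of the correspondingly transformed lattice, whence Minkowski's theorem---or a transference inequality relating the covering radius to the length of a shortest dual vector---yields a short vector of the transformed dual lattice, i.e.\ a thin direction $c$ for $\bar C$. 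Tracking the distortion, namely the factor $2n$ between the two ellipsoids together with the determinant and successive-minima factors from Minkowski's estimates, is exactly what inflates $\omega(n)$ to the stated exponential size, and carrying out this bookkeeping cleanly is the crux of the proof.
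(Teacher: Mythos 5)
Your overall strategy -- reduce to lattice width via flatness -- is genuinely different from the paper's, and the parts of it you actually carry out are correct: the containment $C\subseteq 2(n-1)\bar{C}$ holds (both bodies have their apex at the origin, so the scaling computation is exactly as you wrote it), a dual vector $c\in\Lambda^\star$ does place $C\cap\Lambda$ on at most $w_c(C)+1$ parallel hyperplanes $\{x\;|\;c^\mathsf{T}x=k\}$, $k\in\Z$, and any reasonable flatness constant fits under $4^nn^{3n}$ with enormous room to spare. For contrast, the paper needs none of this machinery: it covers $C\subset[-2n,2n)^n$ by the $4^nn^{3n}$ boxes $d+[0,\frac{1}{n^2}]^n$ with $d\in[-2n,2n)^n\cap\frac{1}{n^2}\Z^n$, and argues that no single box can contain $n+1$ affinely independent points of $\Lambda$, since such points would span a centrally symmetric parallelepiped $P\subset[-\frac{1}{2n},\frac{1}{2n}]^n$ with $P+\Lambda=\R^n$, while a translate of $[-\frac{1}{2n},\frac{1}{2n}]^n$ fits inside $\intt(\bar{C})$ -- contradicting $\intt(\bar{C})\cap\Lambda=\emptyset$. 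That argument is elementary and self-contained; yours would buy far fewer hyperplanes, and parallel ones, but at the price of importing the flatness theorem.

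The gap is real and sits exactly where you located it: the width bound $w_c(\bar{C})\le\omega(n)$ is never established, and of the two tools you offer for it, one cannot work. Minkowski's theorem applied to the dual lattice yields a nonzero $c\in\Lambda^\star$ with $\|c\|\le\sqrt{n}\,\det(\Lambda^\star)^{1/n}$ -- a bound involving only the determinant, with no relation to the radius of your lattice-free ball. Take $\Lambda=\delta\Z\times\delta^{-1}\Z$ in $\R^2$: its determinant is $1$, it admits lattice-free balls of radius about $\frac{1}{2\delta}$, and Minkowski guarantees only a dual vector of length at most $\sqrt{2}$, which certifies nothing as $\delta\to 0$; the dual vector $(0,\delta)$ that actually makes such a ball flat is found by transference, not by the volume argument. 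So your "elementary" derivation genuinely requires a transference theorem relating the covering radius to $\lambda_1(\Lambda^\star)$ (a Mahler-type bound with an $n!$ factor suffices, Banaszczyk gives $O(n)$) -- which is precisely the kind of black box you set out to avoid, and whose proof is the real content of the step you deferred as "bookkeeping." Two honest ways to close the proof: cite the flatness (or a transference) theorem explicitly, after which your argument is complete and in fact proves a stronger count; or adopt the paper's covering argument, which needs nothing beyond the observation that $n+1$ affinely independent lattice points in a tiny box force a lattice point in every translate of a slightly larger box.
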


\begin{proof}
Our idea is to cover $C$ with $4^n n^{3n}$ boxes.
Then we show that the lattice points in each box lie on a
single hyperplane.
We note that, if a convex set $L \subset \R^n$ satisfies
$L + \Lambda = \R^n$, then any translate of $L$ contains at
least one point of $\Lambda$.
Furthermore, observe that, for any points
$v_0, \dots, v_n \in [0,\frac{1}{n^2}]^n$, it holds that
\begin{equation}\label{coveringArg1}
\left\lbrace x\in \R^n \;{\Big |}\; x=\sum_{i=1}^n
 \lambda_i(v_i-v_0) \text{ and }
 -\frac{1}{2}\le\lambda_i\le\frac{1}{2} \text{ for all } i
 = 1, \dots, n\right\rbrace\subset\left[-\frac{1}{2n},\frac{1}{2n}\right]^n\end{equation}
and for a sufficiently small $\alpha>0$ it holds
\begin{equation}\label{coveringArg2}
\left[-\frac{1}{2n},\frac{1}{2n}\right]^{n-1}\times\left[\frac{n-1-\alpha}{n},\frac{n-\alpha}{n}\right]\subset\intt(\bar{C}).
\end{equation}
It is straightforward to check that the right hand side in
\eqref{coveringArg1} and the left hand side in
\eqref{coveringArg2} are translates.
More precisely, the set $[-\frac{1}{2n}, \frac{1}{2n}]^n +
\frac{2(n-\alpha)-1}{2n} e_n =
[-\frac{1}{2n}, \frac{1}{2n}]^{n-1} \times
[\frac{n-1-\alpha}{n}, \frac{n-\alpha}{n}]$, where $e_n$
denotes the $n$-th unit vector.
Observe that $C \subset [-2n,2n)^n$.
Next we partition $[-2n,2n)^n$ into boxes.
Let $D := [-2n,2n)^n \cap \frac{1}{n^2} \Z^n$.
Then the cardinality of $D$ is $4^n n^{3n}$.
Moreover, $C\subset [-2n,2n)^n \subset D+[0,\frac{1}{n^2}]^n$.
Now assume that there exists a box
$d + [0,\frac{1}{n^2}]^n$, with $d \in D$, that contains
$n+1$ affinely independent lattice points $v_0, \dots, v_n$,
i.e.~assume that
$v_0, \dots, v_n \in \Lambda \cap ( d+[0,\frac{1}{n^2}]^n )$.
Then $\lbrace x\in \R^n \; | \; x=\sum_{i=1}^n
 \lambda_i(v_i-v_0) \text{ and }
 -\frac{1}{2}\le\lambda_i\le\frac{1}{2} \text{ for all } i
 = 1, \dots, n\rbrace + \Lambda = \R^n$.
This, together with (\ref{coveringArg1}) and (\ref{coveringArg2}),
contradicts $\intt(\bar{C})\cap\Lambda=\emptyset$.
\end{proof}

In order to apply Lemma~\ref{covering} in our proof of
Theorem~\ref{thm:ConIntMin}, we will adapt it to the
notation that will be used later and we will show that we
can compute the hyperplanes efficiently. 

\begin{lemma}\label{covering2}
Let $\Lambda$ be an arbitrary lattice.
Let $P\subset\R^n$ be a $(n-1)$-dimensional polytope, and let
$\bar{P}$ be defined as in Lemma~\ref{shrinking}.
Furthermore, let $y \in \R^n \setminus \aff(P)$ such that
$\intt\left(\conv(\{y\},\bar{P})\right) \cap \Lambda = \emptyset$.
In polynomial time in the input size of $P$ and $y$, we can construct 
hyperplanes containing all the lattice points $\conv(\{y\},P)\cap\Lambda$.
The number of hyperplanes is at most $4^nn^{3n}$.
\end{lemma}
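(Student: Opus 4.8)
The plan is to reduce the statement to Lemma~\ref{covering} by an explicit rational affine change of coordinates that carries the pair $\bigl(\conv(\{y\},\bar P),\,\conv(\{y\},P)\bigr)$ into the standard configuration modelled by $(\bar C,C)$. The starting point is Lemma~\ref{shrinking}, applied to $P$ inside its $(n-1)$-dimensional affine hull: it produces a centre $c\in\bar P$ and an ellipsoid $E\subset\aff(P)$ with $c+\tfrac12E\subseteq\bar P\subseteq P\subseteq c+(n-1)E$. The crucial feature is that the ratio $2(n-1)$ between the outer and inner radii here matches exactly the ratio between the cross-sections of $C$ and $\bar C$, which is what makes a single transformation able to handle both cones simultaneously.

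Concretely, I would take an affine map $T$ sending $\aff(P)$ to the hyperplane $\{x_n=1\}$, the centre $c$ to $e_n$, and (after composing with the linear map rounding $E$ to a Euclidean ball) the ellipsoid $E$ to a ball, while placing the apex $y$ at the origin. Under $T$ the two cones become truncated circular cones with common apex $0$, of cross-sectional radii $\rho_{\mathrm{in}}=\tfrac12 s$ and $\rho_{\mathrm{out}}=(n-1)s$ at $x_n=1$, where $s$ is the image radius of $E$. I would fix $s$ so that $\rho_{\mathrm{out}}^2=2(n-1)$; then every point of the outer cone obeys $\|x'\|^2\le\rho_{\mathrm{out}}^2x_n^2\le 2(n-1)\,x_n$ (using $x_n\le1$, hence $x_n^2\le x_n$), so $T(\conv(\{y\},P))\subseteq C\subseteq[-2n,2n)^n$. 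Once the hypothesis of Lemma~\ref{covering} is secured for the lattice $T(\Lambda)$, it yields at most $4^nn^{3n}$ hyperplanes covering $C\cap T(\Lambda)\supseteq T(\conv(\{y\},P))\cap T(\Lambda)$, and their $T^{-1}$-images are the desired hyperplanes through $\conv(\{y\},P)\cap\Lambda$; since affine images of hyperplanes are hyperplanes, both the count and the conclusion are preserved.

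The step I expect to be the main obstacle is the transfer of the hypothesis, namely deducing $\intt(\bar C)\cap T(\Lambda)=\emptyset$ from $\intt(\conv(\{y\},\bar P))\cap\Lambda=\emptyset$. This would follow at once from $\bar C\subseteq T(\conv(\{y\},\bar P))$, but no such inclusion can hold: near the common apex a circular cone is strictly thinner than the paraboloid $\bar C$, so no affine image of the inner cone contains all of $\bar C$. The way around this is to look inside the proof of Lemma~\ref{covering} rather than to invoke it verbatim. That proof uses $\bar C$ only through the inclusion \eqref{coveringArg2}, i.e.\ that $\bar C$ contains one fixed box of side $\tfrac1n$ sitting near the cap $x_n\approx1$; it therefore suffices that $T(\conv(\{y\},\bar P))$ contains such a cap-box. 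This does hold, because near $x_n=1$ the inner cone has radius of order $\rho_{\mathrm{in}}=\tfrac{1}{\sqrt{2(n-1)}}$, which dominates the circumradius $\tfrac{1}{2n}\sqrt{n-1}$ of the required side-$\tfrac1n$ box. I would then re-run the argument directly: cover the outer cone by the $4^nn^{3n}$ grid boxes $d+[0,\tfrac1{n^2}]^n$, and if some box held $n+1$ affinely independent lattice points, use \eqref{coveringArg1} to translate the spanned parallelepiped into that cap-box, forcing a lattice point in $\intt(\conv(\{y\},\bar P))$ and contradicting the hypothesis. Hence each box carries its lattice points on a single hyperplane.

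For the computational claim I would observe that, for a rational polytope $P$, a rational ellipsoidal sandwich of ratio $O(n)$ realising $T$ can be computed in time polynomial in the bit-sizes of $P$ and $y$, using the explicit description of $\bar P$ from Remark~\ref{rem:shrinking} together with a rational rounding ellipsoid in $\aff(P)$. Then $T$ and $T^{-1}$ are rational maps of polynomial size, the at most $4^nn^{3n}$ hyperplanes constructed in the model are rational, and their $T^{-1}$-images are the required hyperplanes; as $n$ is fixed, the whole construction runs in polynomial time.
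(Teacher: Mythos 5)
Your proposal is correct, and its skeleton is the same as the paper's: apply Lemma~\ref{shrinking} to $P$ inside $\aff(P)$ to get $c+\tfrac12E\subseteq\bar P\subseteq P\subseteq c+(n-1)E$, normalize by a bijective rational affine map so that the truncated cone over $P$ lands inside $C$, run the box-covering argument of Lemma~\ref{covering} on the image lattice, and pull the hyperplanes back. The genuine difference is at the hypothesis-transfer step, and there your treatment is better than the paper's. The paper asserts that the affine map carries $\conv(\{y\},c+2(n-1)E)$ and $\conv(\{y\},c+E)$ \emph{exactly} onto $C$ and $\bar C$ and then invokes Lemma~\ref{covering} as a black box; taken literally this is impossible (affine maps send truncated cones to truncated cones, while $C$ and $\bar C$ are bounded by paraboloids), and read as inclusions the one that actually holds, $A(\conv(\{y\},c+E))\subseteq\bar C$, points the wrong way: near the common apex any cone is strictly thinner than $\bar C$, so $\bar C\not\subseteq A(\conv(\{y\},\bar P))$ and the emptiness hypothesis $\intt(\bar C)\cap A(\Lambda)=\emptyset$ cannot be inferred verbatim. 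You identify exactly this obstruction and repair it in the natural way: the proof of Lemma~\ref{covering} uses $\bar C$ only through \eqref{coveringArg2}, i.e., only through the fact that $\intt(\bar C)$ contains a translate of $[-\tfrac{1}{2n},\tfrac{1}{2n}]^n$ near the cap $x_n\approx1$, and your computation (inner-cone radius $1/\sqrt{2(n-1)}$ at the cap versus required circumradius $\sqrt{n-1}/(2n)$, leaving a factor-$\sqrt2$ slack) shows the image of $\conv(\{y\},\bar P)$ contains such a cap-box, so the parallelepiped-translation argument goes through with $\intt(\bar C)$ replaced by $\intt\bigl(A(\conv(\{y\},\bar P))\bigr)$. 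In short, what your route buys is a proof that is actually airtight at the one point where the paper's own write-up is flawed as stated, at the modest cost of reusing the internals of Lemma~\ref{covering} rather than its statement.

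One minor omission on the algorithmic side: you assert, but do not explain, how the single hyperplane carrying the lattice points of each grid box $d+[0,\tfrac{1}{n^2}]^n$ is to be computed; the paper does this via \cite[Lemma 6.5.3]{GroetschelLovaszSchrijver-Book88}, and your argument needs the same (or an equivalent) reference to turn the existence of those hyperplanes into a polynomial-time construction.
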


\begin{proof}
From Lemma \ref{shrinking}, it follows that there exists a $(n-1)$-dimensional ellipsoid $E$ and a point $c\in\bar{P}$ such that
$c+E\subset\bar{P}$ and $P\subset c+2(n-1)E$.
In particular we can compute $\bar{P}$ in polynomial time (see Remark \ref{rem:shrinking}) and hence we can compute $E$ and $c$ in polynomial time (see \cite[Theorem 3.3.3]{GroetschelLovaszSchrijver-Book88} or \cite{VandenbergheBoydWu1998}).
Moreover, there exists a bijective affine mapping $A:\R^n\mapsto\R^n$
such that
$A\left(\conv\left(\{y\},c+2(n-1)E\right)\right)=C$
and
$A\left(\conv\left(\{y\},c+E\right)\right)=\bar{C}$,
with $C$ and $\bar{C}$ as in Lemma~\ref{covering}.
By applying Lemma~\ref{covering} to  $C$, $\bar{C}$, and
$\Lambda^\prime = A(\Lambda)$ it follows that we can place the lattice points $\conv(\{y\},P)\cap\Lambda$
onto at most $4^n n^{3n}$ hyperplanes.

It remains to construct the hyperplanes.
For that, we use again the notation of the previous Lemma~\ref{covering} and its proof.
For every $d\in D$ there exists a hyperplane $H_d$ such that $A^{-1}(d+[0,\frac{1}{n^2}]^n)\cap\Lambda\subset H_d$.
By  \cite[Lemma 6.5.3]{GroetschelLovaszSchrijver-Book88}, we can determine $H_d$ explicitly in polynomial time.
\end{proof}

\section{Proof of Theorem \ref{thm:ConIntMin}.} \label{sec.thm}

Let us first outline the main steps of the proof.

We start by applying the $\Delta$-Feasibility Oracle to
the polytope $[-B,B]^n$.
Assume that the oracle returns a point $y \in K_{\frac{\varepsilon}{2}}$.
We then consider an arbitrary facet $F$ of $[-B,B]^n$, and define
the set $T_0 := \conv(\{y\},F)$.
The basic idea is to successively construct
subsets $T_0 \supset T_1 \supset T_2 \ \dots$
that satisfy $\vol(T_{i+1}) < \vol(T_i)$ for all $i$.
This subset construction is iterated until we either obtain a
set $T_i$ in which we can find 
an integer point $z\in K_\varepsilon$; or the volume of one of the constructed sets 
is so small that we can apply
Lemma~\ref{minimizing} to reduce the $n$-dimensional problem
to a $(n-1)$-dimensional problem.
By our hypothesis of induction, the $(n-1)$-dimensional problem can be solved
in polynomial time.

Let us now explain how the construction of the sets $T_i$ is
implemented. 
Each set $T_{i+1}$ arises from the set $T_i$ by intersecting
$T_i$ with a half-space as follows.
We first define a certain scaling of $F$, say $\bar{F}$,
such that $\bar{F} \subset F$.
Next we employ the Linear Integer Optimization Oracle.
If $\conv(\{y\},\bar{F}) \cap \Z^n = \emptyset$, then
Lemma~\ref{covering2} implies that we either find a
point $z \in T_i\cap K_\varepsilon \cap \Z^n$ by solving a constant number of lower-dimensional problems, or we know
that $T_i\cap K_0\cap \Z^n=\emptyset$.
On the other hand, if $\conv(\{y\},\bar{F}) \cap \Z^n \neq
\emptyset$, then we  compute a point $x^\star
\in\conv(\{y\},\bar{F}) \cap \Z^n$ closest to $y$
with respect to the normal vector of $\aff(F)$.
Let $H^\star$ be the hyperplane parallel to $\aff(F)$ and
passing through $x^\star$.
Then we use the Separating Hyperplane Oracle to determine a
$(n-2)$-dimensional hyperplane $S^\star$ in $H^\star$
separating $x^\star$ from the level set $H^\star \cap K_{\frac{\varepsilon}{2}}$.
In turn, $S^\star$ is lifted to the
$(n-1)$-dimensional hyperplane $S := \aff(\{y\},S^\star)$.
Let $S^+$ be the half-space containing $H^\star \cap K_{\frac{\varepsilon}{2}}$ and
having $S$ as its boundary. 
We then define $T_{i+1} := T_i \cap S^+$.
Lemma~\ref{cutting} guarantees a sufficient decrease of
the volume of $T_{i+1}$ with respect to $T_i$.
It remains to check the integer points between $H^\star$ and
the hyperplane parallel to $H^\star$ and passing through
$y$.
For this, we employ Lemma~\ref{covering2} again.

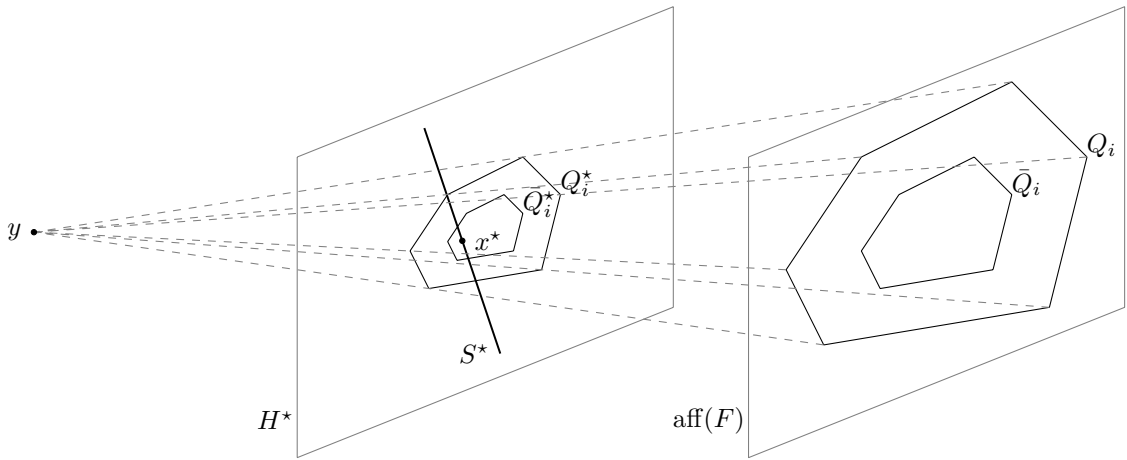
\begin{figure}[ht]
\centering
\begin{tikzpicture}[scale=1]
\draw [color=gray]{(-2.5,-3) -- (-2.5,1) (-2.5,1) -- (2.5,3) (2.5,3) -- (2.5,-1) (2.5,-1) -- (-2.5,-3)
	(3.5,-3) -- (3.5,1) (3.5,1) -- (8.5,3) (8.5,3) -- (8.5,-1) (8.5,-1) -- (3.5,-3)};	   
\draw {(4,-0.5) -- (5,1) (5,1) -- (7,2) (7,2) -- (8,1) (8,1) -- (7.5,-1) (7.5,-1) -- (4.5,-1.5) (4.5,-1.5) -- (4,-0.5)
	(5,-0.25) -- (5.5,0.5) (5.5,0.5) -- (6.5,1) (6.5,1) -- (7,0.5) (7,0.5) -- (6.75,-0.5) (6.75,-0.5) -- (5.25,-0.75) (5.25,-0.75) -- (5,-0.25)
	(-1,-0.25) -- (-0.5,0.5) (-0.5,0.5) -- (0.5,1) (0.5,1) -- (1,0.5) (1,0.5) -- (0.75,-0.5) (0.75,-0.5) -- (-0.75,-0.75) (-0.75,-0.75) -- (-1,-0.25)
	(-0.5,-0.125) -- (-0.25,0.25) (-0.25,0.25) -- (0.25,0.5) (0.25,0.5) -- (0.5,0.25) (0.5,0.25) -- (0.375,-0.25) (0.375,-0.25) -- (-0.375,-0.375) (-0.375,-0.375) -- (-0.5,-0.125)};
\draw [line width=0.75pt]{( -0.81,1.385)--(-0.305,-0.115) (-0.305,-0.115) -- (0.2,-1.615)};
\draw [dashed,color=gray]{(4,-0.5) -- (-6,0 ) (5,1) -- (-6,0 ) (7,2) -- (-6,0 ) (8,1) -- (-6,0 ) (7.5,-1) -- (-6,0 ) (4.5,-1.5) -- (-6,0 )};
\draw (-6,0 ) node[circle, fill=black, draw, ,inner sep=0pt, minimum width=2pt, label=left:$y$]{};	   
\draw (-2.3,-2.5 ) node[draw=none,fill=none,label=left:$H^\star$]{};
\draw (3.7,-2.5 ) node[draw=none,fill=none,label=left:$\aff(F)$]{};
\draw (-0.305,-0.115 ) node[circle, fill=black, draw, ,inner sep=0pt, minimum width=2pt, label=right:$x^\star$]{};
\draw (0.25,0.4) node[draw=none,fill=none,label=right:$\bar{Q}^\star_i$]{};
\draw (0.75,0.65) node[draw=none,fill=none,label=right:$Q^\star_i$]{};
\draw (6.75,0.65) node[draw=none,fill=none,label=right:$\bar{Q}_i$]{};
\draw (7.75,1.15) node[draw=none,fill=none,label=right:$Q_i$]{};
\draw(0.3,-1.615) node[draw=none,fill=none,label=left:$S^\star$]{};
\end{tikzpicture}
\caption{Construction of the truncated cones in the proof of
  Theorem \ref{thm:ConIntMin}.}
\label{fig:1}
\end{figure}

\begin{proof}[Proof of Theorem \ref{thm:ConIntMin}]
First, we apply the $\Delta$-Feasibility Oracle to $P=[-B,B]^n$.
If the oracle returns that there is no point in $K_0$, then $K_0 \cap \Z^n=\emptyset$.
So let us assume that the oracle returns a point $y \in K_{\frac{\varepsilon}{2}}$.

We use induction on the dimension $n$.
If $n=1$, then we just check whether
$\lfloor{y}\rfloor \in K_\varepsilon$ or
$\lceil{y}\rceil \in K_\varepsilon$.
In the following let us assume that $n \ge 2$, and that we
can solve all lower-dimensional problems.
Let $F_1,\dots,F_{2n}$ be the facets of $[-B,B]^n$.
Then
\begin{equation*}
[-B,B]^n=\bigcup_{j=1}^{2n}\conv(\{y\},F_j).
\end{equation*}
The following procedure is applied to every facet of $[-B,B]^n$.
Hence let us only consider an arbitrary facet
$F\in\{F_1,\dots,F_{ 2n}\}$.
We define
\begin{equation*}
Q_0:=F\quad\text{ and }\quad T_0:=\conv(\{y\},Q_0).
\end{equation*}
Let $\sigma$ denote the Euclidean distance of $y$ to
$\aff(F)$. 
Then the volume of $T_0$ is proportional to the
$(n-1)$-dimensional volume of $Q_0$ times $\sigma$.
More precisely,
$\vol(T_0)=\frac{\sigma}{n}\vol{}_{n-1}(Q_0)$.
In the following, we construct a sequence
$T_0\supset T_1\supset T_2 \ \dots$. 
The construction terminates either 
with a set
$T_k$ which contains an integer point $z\in K_\varepsilon$ we can find;
or 
 with the conclusion that no integer point
in $T_0 \cap K_0$ exists.
Below we show that 
either way we need to perform at most $O(\log(B))$ steps.
Moreover, we show that the iterative construction of a set
$T_{i+1}$ from $T_i$ is performed in polynomial time, and that
$(T_0\setminus T_i)\cap K_0 \cap\Z^n=\emptyset$, and $\vol(T_{i+1})\le
\left(1-2^{-n}(n-1)^{1-n}\right)\vol(T_i)$ for all $i$.
Since in each step the decrease of the volume is only
dependent on $n$, it is guaranteed that for some
$k \in O(\log(B))$ it holds that $\vol(T_{k})<\frac{1}{n!}$.
Then, due to Lemma \ref{minimizing}, it follows that
$\dim\left(T_k\cap\Z^n\right)\le n-1$, and we can easily
determine whether $T_k\cap K_0\cap\Z^n$ is empty or find a point in $T_k\cap K_\varepsilon\cap\Z^n$, by induction. 

The iterative construction is as follows.
Let $Q_i$ and $T_i$ be given.
First we define the auxiliary polytopes
\begin{equation*}
\bar{Q}_i:=\left\lbrace x\in\R^n\;\Big{|}\;x+\frac{1}{4n}(Q_i-Q_i)\subset Q_i\right\rbrace \quad\text{ and }\quad \bar{T}_i:=\conv(\{y\},\bar{Q}_i)
\end{equation*}
(see Figure~\ref{fig:1} and Remark \ref{rem:shrinking}).
Next we employ the Linear Integer Optimization Oracle to
solve the linear integer program
\begin{equation}\label{IPn}
\min h^\mathsf{T} x \text{ s.t. } x \in \bar{T}_i \cap \Z^n,
\end{equation}
where $h$ is the normal vector of $\aff(F)$ such that
$h^\mathsf{T} y < h^\mathsf{T} x$ for $x\in F$.
We distinguish two cases.

\underline{\bf Case 1}
The linear integer program \eqref{IPn} is infeasible.
Then $\bar{T}_i\cap\Z^n=\emptyset$.
By construction, we can apply Lemma \ref{covering2} to
determine whether there exists an $z \in
(T_i\setminus\bar{T_i}) \cap K_\varepsilon\cap \Z^n$ or
whether
$(T_i\setminus\bar{T_i}) \cap K_0\cap \Z^n=\emptyset$.
This requires to solve at most $k\le4^nn^{3n}$ subproblems of dimension
$n-1$.
Let these subproblems be contained in the hyperplanes $H_1,
\dots ,H_{k}$.
Then, for all $j=1, \dots ,k$, we test the
lower-dimensional sets
\begin{equation*}
T_i\cap H_j\cap K_0 \cap \Z^n
\end{equation*}
for feasibility.
By assumption of induction, all these problems can be solved in polynomial
time.

\underline{\bf Case 2}
The linear integer program (\ref{IPn}) has an optimal solution $x^\star$.
If $x^\star\in K_\varepsilon$, then we are done.
Otherwise, let $H^\star:=\{x\in\R^n\;|\;h^\mathsf{T} x=h^\mathsf{T} x^\star\}$, i.e.~the hyperplane containing $x^\star$ and being parallel to $\aff(F)$.
We define
\begin{flalign*}
&Q_i^\star:=T_i\cap H^{\star}\quad\text{ and }\quad T_i^\star:=\conv(\{y\},Q_i^\star),\\
&\bar{Q}_i^\star:=\bar{T}_i\cap H^\star\quad\text{ and }\quad\bar{T}_i^\star:=\conv(\{y\},\bar{Q}_i^\star).
\end{flalign*}
Using the Separating Hyperplane Oracle with $A=H^\star$ and $a=x^\star$,
let $S^\star\subset H^\star$ be a $(n-2)$-dimensional hyperplane containing $x^\star$ and separating $x^\star$ from $H^\star\cap K_{\frac{\varepsilon}{2}}$.
Next let $S$ denote the unique $(n-1)$-dimensional
hyperplane containing $y$ and $S^\star$, i.e.~$S := \aff(\{y\},S^\star)$.
Furthermore, let $S^+$ denote the half-space with boundary
$S$, and containing $H^\star\cap K_{\frac{\varepsilon}{2}}$.
Then, due to the convexity of the level set, we observe
\begin{equation}\label{equationXY}
\left(\left((T_i\setminus T_i^\star)\setminus H^+\right)\cap K_0\right)\subset \left(\left((T_i\setminus T_i^\star)\setminus H^+\right)\cap K_{\frac{\varepsilon}{2}}\right) =\emptyset.
\end{equation}
It remains to check for an improving integer point $z\in K_\varepsilon$ within $T_i^\star\setminus\bar{T}_i^\star$.
For that, we apply Lemma \ref{covering2} to $T_i^\star$ and
$\bar{T}_i^\star$ in the same way that we described in Case 1.
If none of the corresponding subproblems returns a point $z\in K_\varepsilon\cap\Z^n$, then together with (\ref{equationXY}) we know that
$\left(T_i\setminus S^+\right)\cap K_0\cap\Z^n=\emptyset$.
We define
\begin{equation*}
Q_{i+1}:=Q_i\cap S^+\text \quad\text{ and }\quad T_{i+1}:=T_i\cap S^+.
\end{equation*}
It holds that  $z\notin K_0$ for all $z\in(T_0\setminus T_{i+1})\cap\Z^n$.
In particular, from Lemma \ref{cutting} it follows, that
$\vol{}_{n-1}(Q_{i+1})\le(1-2^{-n}(n-1)^{1-n})\vol{}_{n-1}(Q_{i+1})$.
Hence,
$\vol(T_{i+1})\le(1-2^{-n}(n-1)^{1-n})\vol(T_i)$.
\end{proof}

\section{Extension to the mixed integer setting.}\label{sec:mixed-integer}

It is straightforward to extend Theorem~\ref{thm:ConIntMin}
to the mixed integer setting with a constant number of
integer variables $z_1, \dots, z_n$ and any number of
continuous variables $x_1, \dots, x_d$.
Simply replace any query with input $z \in \Z^n$ to the
evaluation oracle $f(\cdot)$ by a call of a $\Delta$-Feasibility Oracle applied to a fixed $z^\star \in \Z^n$ and
returning the value
$\min \{f(z^\star,x) \; | \; (z^\star,x) \in P\}$.

\section*{Acknowledgements.}

We thank David Adjiashvili and Michel Baes for helpful
discussions on the topic.
Thanks also to Matthias K\"oppe for his comments on a 
preliminary version of this manuscript.


\small
\bibliographystyle{amsplain}
\providecommand{\bysame}{\leavevmode\hbox to3em{\hrulefill}\thinspace}
\providecommand{\MR}{\relax\ifhmode\unskip\space\fi MR }
\providecommand{\MRhref}[2]{%
  \href{http://www.ams.org/mathscinet-getitem?mr=#1}{#2}
}
\providecommand{\href}[2]{#2}


\end{document}